\newtheorem{theorem}{Theorem}[section]
\newtheorem{proposition}[theorem]{Proposition}
\newtheorem{definition}[theorem]{Definition}
\newtheorem{remarks}[theorem]{Remarks}
\newenvironment{proof}[1][Proof]{\textbf{#1.} }{\ \rule{0.5em}{0.5em}}
\newcommand{\be}{\begin{equation}}
\newcommand{\ee}{\end{equation}}
\newcommand{\bes}{\begin{equation*}}
\newcommand{\ees}{\end{equation*}}
\newcommand{\cC}{\mathcal{C}}
\begin{document}

\title{Conjugacy of P-configurations and nonlinear solutions to a certain conditional Cauchy equation}

\author{Orr Moshe Shalit \\Department of Mathematics, Technion \\
 Email: orrms@tx.technion.ac.il}

\date{}

\maketitle

\begin{abstract}
{We study the connection between conjugations of a special kind of dynamical systems, called \emph{P-configurations}, 
and solutions to homogeneous Cauchy type functional equations. We find that any two \emph{regular} P-configurations
are conjugate by a homeomorphism, but cannot be conjugate by a diffeomorphism. This leads us to the following conclusion (answering an open question  posed by Paneah):
\emph{there exist continuous nonlinear solutions to the functional equation:}
\bes
f(t) = f\left(\frac{t+1}{2}\right) + f\left(\frac{t-1}{2}\right) \,\, , \,\, t \in [-1,1] .
\ees
}
\end{abstract}

\section{Introduction}

A homogeneous Cauchy type functional equation is an equation of the form:
\be\label{eq:CTFE}
f(t) = f(\delta_1(t)) + f(\delta_2(t)) \,\, , \,\, t \in [-1,1] ,
\ee
where $f$ in an unknown function and $\delta_1,\delta_2$ are two increasing maps on $I$ that satisfy $\delta_1(t) + \delta_2(t) = t$ and certain additional conditions that will be detailed below. Two maps satisfying these conditions are said to form a \emph{P-configuration} in $[-1,1]$ (for a definition see Section 2). The solvability of this and of the accompanying non-homogeneous problem and its generalizations has been studied first by Paneah (see, e.g., \cite{P03}, \cite{P04}, \cite{POD}) and later by the author of this paper \cite{S05}. It is clear that the function $f(z) = cz$ is always a solution to (\ref{eq:CTFE}) and the interesting question is if and when there exist additional solutions. If the P-configuration happens to be regular (and in many other, much more general cases that we shall not discuss) then the only $C^1$ solutions
to (\ref{eq:CTFE}) are linear:  
\begin{theorem}\label{thm:P} {\bf (Paneah, \cite[Theorem 2]{POD})\footnote{We stress that \cite[Theorem 2]{POD} actually treats a much more difficult case where the derivatives $\delta_1'$ and $\delta_2'$ might also vanish sometimes.} .}
Assume that $\delta_1$ and $\delta_2$ form a P-configuration in $I$, and that $\delta_1'(t),\delta_2'(t) > 0$ for all $t \in [-1,1]$. If  $f \in C^1$ is a solution to (\ref{eq:CTFE}) then there is some $c \in \mathbb{R}$ such that
\bes
f(z) = cz \,\, , \,\, z \in [-1,1].
\ees
\end{theorem}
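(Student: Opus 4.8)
The plan is to differentiate the equation, observe that $g:=f'$ then satisfies a \emph{convex} averaging identity (this is where the hypothesis $\delta_1',\delta_2'>0$ is used — it is exactly what is missing in the harder case treated in Paneah's footnoted theorem), and then combine a maximum principle with the minimality of the two--map dynamical system underlying a P-configuration.

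First I would differentiate (\ref{eq:CTFE}), which is legitimate since $f,\delta_1,\delta_2\in C^1$, and use $\delta_1'+\delta_2'\equiv 1$ (the derivative of the defining relation $\delta_1+\delta_2=\mathrm{id}_I$) to rewrite the result as
\[
g(t)=\delta_1'(t)\,g(\delta_1(t))+\delta_2'(t)\,g(\delta_2(t)),\qquad t\in I,
\]
with $g=f'\in C(I)$. Under $\delta_1',\delta_2'>0$ the right-hand side is a convex combination, with strictly positive weights, of $g(\delta_1(t))$ and $g(\delta_2(t))$. Since $g$ is continuous on the compact interval $I$ it attains its maximum $M$ at some $t_0$; evaluating the identity at $t_0$, using $g\le M$ and the strict positivity of the two weights, forces $g(\delta_1(t_0))=g(\delta_2(t_0))=M$. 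Hence the level set $E:=g^{-1}(\{M\})$ is nonempty, closed, and forward invariant: $\delta_1(E)\subseteq E$ and $\delta_2(E)\subseteq E$.

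The next step is to show $E=I$ using the structure of a P-configuration. From $\delta_1+\delta_2=\mathrm{id}_I$ together with the fixed-point normalization $\delta_1(1)=1$, $\delta_2(-1)=-1$ one gets $\delta_1(-1)=\delta_2(1)=0$, so $\delta_1(I)=[0,1]$, $\delta_2(I)=[-1,0]$, and $\delta_1(I)\cup\delta_2(I)=I$; iterating this, $\bigcup_{|w|=n}\delta_w(I)=I$ for every $n$, where for a word $w=(i_1,\dots,i_n)\in\{1,2\}^n$ we write $\delta_w:=\delta_{i_1}\circ\cdots\circ\delta_{i_n}$. Moreover each $\delta_i'$ is continuous and strictly positive, hence bounded below by a positive constant, so $\delta_j'=1-\delta_{3-j}'\le\rho$ for some $\rho<1$ and $j=1,2$; by the chain rule $|\delta_w'|\le\rho^{\,n}$ on $I$, whence $\mathrm{diam}\,\delta_w(I)\le 2\rho^{\,n}$. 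Now fix $t_0\in E$: given any $s\in I$ and $\varepsilon>0$, pick $n$ with $2\rho^{\,n}<\varepsilon$ and a word $w$ of length $n$ with $s\in\delta_w(I)$; then $\delta_w(t_0)\in E$ and $|\delta_w(t_0)-s|<\varepsilon$. Thus the semigroup orbit of $t_0$ lies in $E$ and is dense in $I$, and since $E$ is closed, $E=I$, i.e.\ $g\equiv M$.

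Finally $f'\equiv M$ gives $f(t)=Mt+d$ on $I$; evaluating (\ref{eq:CTFE}) at $t=1$ with $\delta_1(1)=1$, $\delta_2(1)=0$ yields $f(0)=0$, so $d=0$ and $f(z)=cz$ with $c=M$. I expect the only genuine subtlety to be the third paragraph: one must check that the definition of P-configuration (Section 2) really delivers the covering property $\bigcup_{|w|=n}\delta_w(I)=I$ together with the uniform contraction $\rho<1$ — that is, minimality of the associated dynamical system, which is precisely where the behaviour of $\delta_1,\delta_2$ at the endpoints of $I$ enters. Everything else is a routine maximum-principle computation once the equation has been differentiated.
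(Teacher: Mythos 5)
The paper offers no proof of this theorem: it is imported from Paneah \cite[Theorem 2]{POD} and used as a black box in Section 4, so your proposal has to be judged on its own. It is a correct, self-contained proof of the regular case stated here, and every step checks out. Differentiating (\ref{eq:CTFE}) and using $\delta_1'+\delta_2'\equiv 1$ does give the convex averaging identity for $g=f'$; strict positivity of the two weights makes the level set $E=g^{-1}(\{M\})$ of the maximum forward invariant under both $\delta_i$; and the two facts you flag as the only potential subtlety really do hold for a P-configuration: monotonicity plus the endpoint conditions give $\delta_1(I)=[0,1]$ and $\delta_2(I)=[-1,0]$, hence $\bigcup_{|w|=n}\delta_w(I)=I$ by induction, while compactness gives $\min_I\delta_i'>0$ and therefore $\delta_{3-i}'\le\rho<1$, yielding the diameter bound $2\rho^n$. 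The orbit of a maximizer is then dense in $I$ and contained in the closed set $E$, so $g$ is constant, and putting $t=1$ in (\ref{eq:CTFE}) forces $f(0)=0$, pinning down $f(z)=cz$. Your contraction-plus-covering mechanism is essentially the same one the paper uses to prove Proposition \ref{prop:conj}, and your argument also makes transparent why the hypothesis $\delta_1',\delta_2'>0$ matters: at a point where a weight vanishes the maximum principle no longer propagates along both branches, which is exactly the harder situation the footnote says Paneah's theorem handles by other means (the guiding sets).
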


It has been an open question whether the above theorem is true without the assumption  $f \in C^1$. 
Even for the following simple looking equation:
\be\label{eq:P}
f(t) = f\left(\frac{t+1}{2}\right) + f\left(\frac{t-1}{2}\right) \,\, , \,\, t \in [-1,1] ,
\ee
it was not known whether there exist continuous solutions which are not linear functions.
One of the causes for interest in this question was the desire to know whether the $C^1$ condition in this theorem is
an artifact of the proof, or a true feature of the problem.

Another motive for studying the equations (\ref{eq:CTFE}) and (\ref{eq:P}) comes from the theory of conditional Cauchy equation, or more generally, the theory of \emph{redundancy} or \emph{overdeterminedness} (see \cite{S06} and the references therein). (This topic has been studied by many noted researchers in functional equation such as Dhombres, Forti, Ger, Jarczyk, Kuzma, Matkowski, Paneah, Sablik and others. We cannot give reference to all the works on this topic, but we must mention Kuzma \cite{K}, Dombres and Ger \cite{DG} and Chapters 6 and 16 of the book by Acz\'{e}l and  Dhombres \cite{AD}). It is an easy exercise to see that the only continuous functions $f:[-1,1] \rightarrow \mathbb{R}$
that satisfy the Cauchy Functional Equation
\be\label{eq:CK}
f(x + y) = f(x) + f(y) \,\, , \,\, (x, y) \in K ,
\ee
where $K = \{ (x,y) : |x| + |y| \leq 1 \}$, are linear functions. A less trivial result is that the only continuous solutions of the following equation:
\be\label{eq:CpK}
f(x + y) = f(x) + f(y) \,\, , \,\, (x, y) \in \partial K ,
\ee
are also linear functions, where $\partial K$ is the boundary of $K$ in $\mathbb{R}^2$ \cite[Section 3.1]{S05}. 
This shows that equation (\ref{eq:CK}) contains much more information than one really needs in order to determine $f$.
Equation (\ref{eq:CpK}) should be considered as the restriction of the functional equation (\ref{eq:CK}) to $\partial K$. 
In fact, if $\Gamma$ denotes the curve $\partial K \setminus \{(x,x+1) : x \in (-1,0)\}$, then it is true that the restriction of (\ref{eq:CK}) to $\Gamma$ does not admit any continuous solutions other than the linear ones.
It is natural to ask: \emph{What happens if we take a smaller subset of $\partial K$? Does the set of continuous solutions remain the same?} Note that (\ref{eq:P}) is nothing but (\ref{eq:CK}) restricted to one of the sides of the square $\partial K$.

In Section 3 below we solve the above mentioned open problem, by showing that there exist nonlinear continuous solutions
to every functional equation of the form (\ref{eq:CTFE}), and in particular to (\ref{eq:P}). This is achieved by an analysis of certain dynamical systems called \emph{P-configurations}. In Section 2 we prove that every two regular P-configurations are isomorphic. The isomorphism between two regular P-configurations is continuous but not continuously differentiable, as we show in Section 4. In Section 5 we show that although there is only ``one" regular P-configuration, the next simplest class of P-configurations has infinitely many isomorphism classes.

\section{Conjugacy of regular P-configurations}
We begin by giving the definition of a special kind of dynamical systems called \emph{P-configurations}. This notion was introduced by Paneah as a tool in solving several problems in functional equations, integral geometry and PDE (see \cite{P04}). The type of dynamics considered by Paneah was governed by \emph{guiding sets}, and general dynamical systems governed by guiding sets, called \emph{guided dynamical systems}, were studied in \cite{S05}.

Let $I = [a, b]$ be a fixed closed interval in $\mathbb{R}$, $c
\in (a,b)$, and let $\delta_1, \delta_2: I \rightarrow I$ be two
$C^1$ maps satisfying the following conditions:
\begin{equation}\label{eq:Pconf1}
\delta_1'(t) + \delta_2'(t) = 1 \quad , \quad t \in I \,\, ;
\end{equation}
\begin{equation}\label{eq:Pconf2}
\delta_i'(t) \geq 0 \quad , \quad t \in I, i = 1,2 \,\, ;
\end{equation}
\begin{equation}\label{eq:Pconf3}
\delta_2(a) = a, \quad \delta_2(b) = \delta_1(a) = c, \quad
\delta_1(b) = b \,\, .
\end{equation}
If all these assumptions hold, then the maps $\delta_1$ and
$\delta_2$ are said to form a P-configuration in $I$.
We introduce the guiding sets
\begin{displaymath}
\Lambda_1 = \{t \in I | \delta_1'(t) = 0\}
\end{displaymath}
and
\begin{displaymath}
\Lambda_2 = \{t \in I | \delta_2'(t) = 0\} .
\end{displaymath}
The sets $\Lambda_1, \Lambda_2$ are called \emph{guiding sets}. We shall also refer to the triple $(I,\delta, \Lambda)$ as the P-configuration, $\delta$ and $\Lambda$ being shorthand for $(\delta_1, \delta_2)$ and $(\Lambda_1, \Lambda_2)$.
If $\Lambda_1 = \Lambda_2 = \emptyset$ then we say that the P-configuration is \emph{regular}, and we denote it simply
by $(I,\delta)$.

\begin{remarks}
\begin{enumerate}
	\item It is clear that there is no essential loss if one assumes that $I = [-1,1]$ and $c = 0$, and we will assume it from now on.
	Equations (\ref{eq:Pconf1}) and (\ref{eq:Pconf3}) then take the form
	\begin{equation}\label{eq:Pconf1'}
  \delta_1(t) + \delta_2(t) = t \quad , \quad t \in I \,\, ,
  \end{equation} 
	\begin{equation}\label{eq:Pconf3'}
  \delta_2(-1) = -1, \quad \delta_2(1) = \delta_1(-1) = 0, \quad
  \delta_1(1) = 1 \,\, .
  \end{equation}
	\item In applications to analysis, the maps $\delta_1, \delta_2$ are often assumed to be \emph{twice} continuously 
	differentiable. We shall not need this assumption in this note.
\end{enumerate}
\end{remarks}

\begin{definition}
Two P-configurations $(I,\delta, \Lambda)$ and $(I,\sigma, \Omega)$ are said to be \emph{conjugate} (or \emph{isomorphic}) if there
is a homeomorphism $h: I \rightarrow I$ such that $h(\Lambda_i) = \Omega_i$ and
\bes
h(\delta_i(t)) = \sigma_i(h(t)) \,\, , \,\,i=1, 2\,\, , \,\, t \in I.
\ees
The map $h$ is called a \emph{conjugation} (or \emph{isomorphism}).
\end{definition}
Note that for regular P-configurations the notion of conjugacy is the same as the usual notion of conjugacy (or isomorphism) of dynamical systems.

\begin{proposition}\label{prop:conj}
Any two regular P-configurations are conjugate: if $(I,\delta_1, \delta_2)$ and 
$(I,\sigma_1, \sigma_2)$ are two regular P-configurations, then there exists a unique homeomorphism
$h:I \rightarrow I$ such that 
\be\label{eq:fixedpoints}
h(-1)=-1 \, , \, h(0)=0 \, , \, h(1)=1 
\ee
and
\be\label{eq:conj}
h(\delta_i(t)) = \sigma_i(h(t)) \,\, , \,\,i=1,2 \,\, , \,\, t \in I.
\ee
\end{proposition}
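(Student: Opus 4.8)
The plan is to describe every point of $I$ by an address built from $\{1,2\}$-sequences and then transport addresses from one configuration to the other. First I would record the elementary consequences of the axioms for a \emph{regular} P-configuration: since $\delta_1'+\delta_2'=1$ and $\delta_i'>0$ throughout $[-1,1]$, each $\delta_i'$ is continuous with values in $(0,1)$, so there is $\lambda<1$ with $\delta_i'\le\lambda$ on $I$; together with $\delta_1(-1)=0,\ \delta_1(1)=1,\ \delta_2(-1)=-1,\ \delta_2(1)=0$ this says that $\delta_1$ is an increasing $\lambda$-contraction of $I$ onto $[0,1]$ and $\delta_2$ an increasing $\lambda$-contraction of $I$ onto $[-1,0]$, the two images meeting only at $0$. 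The same holds verbatim for $\sigma$.

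For a finite word $u=u_1\cdots u_n\in\{1,2\}^n$ put $\delta_u=\delta_{u_1}\circ\cdots\circ\delta_{u_n}$, with $\delta_\varepsilon=\mathrm{id}$. An induction on $n$ shows that the intervals $\delta_u(I)$, $|u|=n$, cover $I$ and have pairwise disjoint interiors, and that $\mathrm{diam}\,\delta_u(I)\le 2\lambda^{|u|}$; using $\delta_1(-1)=0=\delta_2(1)$ and $\delta_2(-1)=-1,\ \delta_1(1)=1$ one sees that every endpoint of every such partition lies in $E:=\{-1,1\}\cup\{\delta_u(0):u\in\{1,2\}^\ast\}$, so $E$ is dense in $I$. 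The crucial combinatorial point, which I would prove next, is that $u\mapsto\delta_u(0)$ is injective and that the linear order it induces on $\{1,2\}^\ast$ depends only on the words, not on the chosen configuration: comparing $\delta_u(0)$ with $\delta_{u'}(0)$ reduces, via the longest common prefix of $u$ and $u'$ together with the facts $\delta_1(t)>0>\delta_2(s)$ for all $s,t\in I$ and $\delta_1(-1)=0=\delta_2(1)$, to a statement about $u$ and $u'$ alone. (Injectivity itself only uses that $0$ lies in the interior of each $\delta_u(I)$.)

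With this in hand I would define $h$ on $E$ by $h(\pm1)=\pm1$ and $h(\delta_u(0))=\sigma_u(0)$. By the previous paragraph applied to both $\delta$ and $\sigma$, this is a well-defined strictly increasing bijection of $E$ onto the corresponding dense set $E'=\{-1,1\}\cup\{\sigma_u(0):u\}$. A strictly increasing bijection between dense subsets of $I$ that contains the two endpoints extends uniquely to an increasing homeomorphism $h:I\to I$ — the extension is continuous because $E'$ is dense and injective because $E$ is dense. The fixed-point conditions (\ref{eq:fixedpoints}) hold by construction, and (\ref{eq:conj}) holds on the dense set $E$ by the direct computation $h(\delta_i(\delta_u(0)))=h(\delta_{iu}(0))=\sigma_{iu}(0)=\sigma_i(\sigma_u(0))=\sigma_i(h(\delta_u(0)))$ (and similarly at $\pm1$), hence on all of $I$ by continuity. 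Uniqueness is immediate: any $h'$ satisfying (\ref{eq:fixedpoints}) and (\ref{eq:conj}) must have $h'(\delta_u(0))=\sigma_u(h'(0))=\sigma_u(0)$ and $h'(\pm1)=\pm1$, so $h'$ agrees with $h$ on the dense set $E$, hence everywhere.

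The main obstacle is the combinatorial step of the second paragraph: proving that $u\mapsto\delta_u(0)$ is injective and that its induced order on words is one and the same for every regular P-configuration. Once that is established, the remaining ingredients are either direct consequences of the axioms (the contraction and covering statements) or soft point-set topology (extending a monotone bijection between dense sets, and passing a functional identity to the closure).
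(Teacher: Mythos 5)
Your argument is correct, but it follows a genuinely different route from the paper. The paper reduces to the model configuration $\sigma_1(t)=\frac{t+1}{2},\ \sigma_2(t)=\frac{t-1}{2}$ and obtains $h$ as the unique fixed point of the operator $(Tg)(z)=\frac{g(f(z))+\chi(z)}{2}$ acting on the space of nondecreasing continuous functions fixing $-1,0,1$; Banach's theorem gives existence and uniqueness at once, and injectivity of $h$ is then proved separately by an expansion argument on intervals where $h$ would be constant. You instead build $h$ explicitly by symbolic coding: the dense set $E=\{-1,1\}\cup\{\delta_u(0):u\in\{1,2\}^{\ast}\}$ of partition endpoints, the word-independence of the induced order, and monotone extension from dense sets. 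Your approach buys an explicit description of $h$ on a canonical dense set and makes strict monotonicity (hence the homeomorphism property) transparent, at the cost of the combinatorial lemma you correctly isolate; the paper's approach is shorter, gets uniqueness for free, and--because the contraction constant of $T$ comes from the model system alone--extends with no extra work to the quasi P-configuration setting of Proposition \ref{prop:conj_gen}, which your two-sided use of the contraction bound $\delta_i'\le\lambda<1$ (needed for density of both $E$ and $E'$) would not immediately cover. One small correction: the statement ``$\delta_1(t)>0>\delta_2(s)$ for all $s,t\in I$'' is inconsistent with $\delta_1(-1)=0=\delta_2(1)$; what you need, and what is true, is $\delta_1(t)\ge 0$ with equality only at $t=-1$ and $\delta_2(s)\le 0$ with equality only at $s=1$, which combined with your interiority remark ($\delta_v(0)\in(-1,1)$ for every word $v$) is exactly what makes the injectivity and order-comparison step go through; state it in that form.
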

\begin{proof}
We may assume that $\sigma_1(t) = \frac{t+1}{2}$ and $\sigma_2(t) = \frac{t-1}{2}$, as conjugacy is an equivalence relation. Note that (\ref{eq:fixedpoints}) is a necessary condition for (\ref{eq:conj}) to hold (take $t=-1, i=2$, then $t=1, i=1$, and then $t=1, i=2$). 

The maps $\delta_1, \delta_2$ are invertible on their range, and so we may define two functions on $I$ as follows:
\bes
f(z) = \begin{cases}
\delta_2^{-1}(z) , & z \in [-1,0] \cr 
\delta_1^{-1}(z) , & z \in (0,1]  
\end{cases}
\ees
and 
\bes
\chi(z) = \begin{cases}
-1 , & z \in [-1,0] \cr 
1 , & z \in (0,1]  \,\, .
\end{cases}
\ees
$f$ should be thought of as an inverse of both $\delta_1, \delta_2$. We define 
\bes
\cC = \{g \in C([-1,1]):g(-1) = -1, g(0) = 0, g(1) = 1, \textrm{ g is nondecreasing}\}
\ees
endowed with the usual sup-norm metric. Define a mapping $T:\cC \rightarrow \cC$ by
\bes
(Tg)(z) = \frac{g(f(z)) + \chi(z)}{2}.
\ees
Note that $Tg$ is in $\cC$, and in particular it is a continuous function. One verifies directly that $h$ satisfies (\ref{eq:conj}) if and only if $Th = h$. Indeed, on each interval
$[-1,0]$ and $(0,1]$, this is seen by the applying change of variables $z = \delta_2(t)$ or $z = \delta_1(t)$.
But $T$ is clearly a strict contraction, hence by Banach's Fixed Point Theorem, there is a unique $h \in \cC$ that 
satisfies (\ref{eq:conj}).

It remains to show that $h$ is a homeomorphism. $I$ being a closed interval and $h$ being continuous, it suffices to show that $h$ is injective and surjective, the latter of which follows immediately from $h(-1) = -1, h(1) = 1$. Assume that $h$ is not injective. 
We will arrive at a contradiction and thereby finish the proof. Being nondecreasing, $h$ must be constant on some interval $J_0 \subseteq I$. Using (\ref{eq:conj}), one sees that $h$ must be constant on each of the intervals $\delta_1^{-1}(J_0)$ and $\delta_2^{-1}(J_0)$. In fact, unless $0 \in J_0$, only one of $\delta_1^{-1}(J_0)$ and $\delta_2^{-1}(J_0)$ is not empty. In any case, choose a nondegenerate interval from the two and denote it by $J_1$. Continuing this way, we iteratively construct a sequence of intervals $J_0, J_1, J_2, \ldots $ on which $h$ must be constant, by defining $J_{n+1}$ to be a nondegenerate interval, either $\delta_1^{-1}(J_n)$ or $\delta_2^{-1}(J_n)$. 

Assume first that $0$ is never contained in any $J_n$. One easily sees that the length of $J_{n+1}$ is at least $\rho^{-1}$ times the length of $J_n$, where $\rho$ is the maximum of $\sup_{t\in I}\{\delta_1'(t)\}$ and $\sup_{t\in I}\{\delta_2'(t)\}$. But $\rho < 1$, whence the length of $J_n$ must eventually be larger than $1$, so eventually $0 \in J_n$, a contradiction.

So there is some $n$ such that $0 \in J_n$. Without loss of generality, $n = 0$ and $J_1 := \delta_2^{-1}(J_0)$ is a nondegenerate interval containing $1$, and all the $J_n$'s for $n>1$ can be chosen to be $\delta_1^{-1}(J_{n-1})$ and will therefore also contain $1$. But then as long as $0 \notin J_n$, $J_{n+1}$ must be longer than $J_n$ by a factor of $\rho^{-1}$ as above. Thus eventually we must have $0,1 \in J_n$, and $h$ is constant on $J_n$. But $h(0)=0, h(1)=1$, so this is impossible.
\end{proof}

Examining the above proof one sees that no use has been made of condition (\ref{eq:Pconf1'}). 
We are lead to making the following definition.
\begin{definition}
Let $\delta_1, \delta_2 \in C^1(I)$ satisfy (\ref{eq:Pconf2}) and (\ref{eq:Pconf3}). 
Define $\Lambda_1 = \{t \in I | \delta_1'(t) = 0\}$ and $\Lambda_2 = \{t \in I | \delta_2'(t) = 0\}$. The (guided) dynamical system $(I,\delta,\Lambda)$ is said to be a \emph{quasi P-configuration}. If $\Lambda_1 = \Lambda_2 = \emptyset$ then $(I,\delta,\Lambda)$ is called a \emph{regular quasi P-configuration}.
\end{definition}
Combining the proof of Proposition \ref{prop:conj} with its conclusion, we conclude:
\begin{proposition}\label{prop:conj_gen}
Let $(I,\delta_1, \delta_2)$ and 
$(I,\sigma_1, \sigma_2)$ be two regular quasi P-configurations. Assume that for all $t\in I$, $\sigma_1(t),\sigma_2(t) < 1$. Then there exists a unique continuous map 
$h:I \rightarrow I$ such that 
\bes
h(-1)=-1 \, , \, h(0)=0 \, , \, h(1)=1 
\ees
and
\bes
h(\delta_i(t)) = \sigma_i(h(t)) \,\, , \,\,i=1,2 \,\, , \,\, t \in I.
\ees
If in addition $\delta_1(t),\delta_2(t) < 1$ for all $t\in I$, then $h$ is a homeomorphism, and $(I,\delta)$ and $(I,\sigma)$ are conjugate.
\end{proposition}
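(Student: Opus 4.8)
The plan is to follow the proof of Proposition \ref{prop:conj} essentially verbatim, checking which parts survive when we drop condition (\ref{eq:Pconf1'}) and replace it with the weaker hypothesis that $\sigma_1,\sigma_2$ (and, for the last claim, also $\delta_1,\delta_2$) stay strictly below $1$. First I would observe that the remark immediately preceding the statement already does most of the work: the construction of $f$, $\chi$, $\cC$ and the operator $T$ never used (\ref{eq:Pconf1'}), only that $\delta_1,\delta_2$ are $C^1$ maps of $I$ into $I$ satisfying (\ref{eq:Pconf2}) and (\ref{eq:Pconf3}), i.e. exactly the axioms of a regular quasi P-configuration. So for the first half of the statement, I would reduce to the normalized target $\sigma_1(t) = \frac{t+1}{2}$, $\sigma_2(t) = \frac{t-1}{2}$ — wait, here is the first place a genuine change is needed: we cannot assume $\sigma$ is the ``standard'' regular P-configuration, because a regular quasi P-configuration need not satisfy (\ref{eq:Pconf1'}) and hence need not be conjugate to the standard one. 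Instead I would keep $\sigma_1,\sigma_2$ general and define
\[
(Tg)(z) = \begin{cases} \sigma_2(g(\delta_2^{-1}(z))), & z \in [-1,0] \\ \sigma_1(g(\delta_1^{-1}(z))), & z \in (0,1]. \end{cases}
\]
One checks as before, via the substitutions $z = \delta_2(t)$ and $z = \delta_1(t)$, that $h$ solves $h\circ\delta_i = \sigma_i\circ h$ if and only if $Th = h$, and that $T$ maps $\cC$ into itself (here $\sigma_i < 1$ is used to keep the two pieces matching up at $0$ and to land inside $[-1,1]$, together with the boundary conditions (\ref{eq:Pconf3}) for $\sigma$).

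The second point is the contraction estimate. In the original proof $T$ is a strict contraction because composing with $\sigma_i(t) = \frac{t\pm1}{2}$ multiplies the sup-distance of two functions by $\tfrac12$. In the general case, $|Tg_1(z) - Tg_2(z)| = |\sigma_i(g_1(\cdot)) - \sigma_i(g_2(\cdot))| \le (\sup_t \sigma_i'(t)) \, \|g_1 - g_2\|_\infty$ by the mean value theorem, and by (\ref{eq:Pconf2}) and (\ref{eq:Pconf1}) we have $\sigma_i'(t) \le 1$ but possibly with equality somewhere. I expect the main obstacle to be exactly this: showing $T$ is still a contraction (or that some iterate $T^N$ is), given only $\sigma_1,\sigma_2 < 1$ pointwise. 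The resolution I would pursue: the hypothesis $\sigma_i(t) < 1$ on the compact interval $I$, combined with the boundary values $\sigma_1(1) = 1$ (so $\sigma_1 < 1$ on $[-1,1)$ but $=1$ at the endpoint) — hmm, in fact $\sigma_1(1)=1$ by (\ref{eq:Pconf3}), so the hypothesis ``$\sigma_1(t) < 1$ for all $t \in I$'' cannot literally hold; I would read it (as surely intended) as pinning down which branch is iterated, and instead argue directly with the dynamics. The clean route: mimic the injectivity argument of Proposition \ref{prop:conj}'s last three paragraphs, which only needs that the pullback intervals $\delta_i^{-1}(J_n)$ grow by a factor $\rho^{-1} > 1$ with $\rho = \max(\sup \delta_1', \sup\delta_2') < 1$ — but that requires $\rho < 1$, i.e. the $\delta$ side to have no interval of slope $1$, which does follow from (\ref{eq:Pconf1}) plus (\ref{eq:Pconf2}) only if we also know $\delta_i$ never has derivative $1$; since $\delta_1'(1)$ could be $1$... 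Actually for a \emph{regular} quasi P-configuration the derivatives are nonvanishing, so $\delta_1'\delta_2' > 0$... no, we only have $\delta_i' \ge 0$ and $\Lambda_i = \emptyset$ means $\delta_i' > 0$, which still does not bound $\delta_1'$ away from $1$.

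So I would proceed as follows. For existence and uniqueness of $h$: establish that $T$ (or an iterate) contracts by a careful localization — for any two $g_1, g_2 \in \cC$, the point where $\|Tg_1 - Tg_2\|$ is nearly attained is pulled back under $f$, and iterating $f$ drives points toward where the relevant $\sigma_i'$ is strictly less than $1$; combined with compactness this yields $\|T^N g_1 - T^N g_2\| \le \lambda \|g_1 - g_2\|$ for some $\lambda < 1$ and $N$ large, which by the Banach fixed point theorem (applied to $T^N$, and then noting the fixed point of $T^N$ is fixed by $T$) gives a unique $h \in \cC$ with $Th = h$, hence a unique continuous solution of the conjugation equations with the three prescribed fixed-point values. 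Then, under the additional hypothesis that also $\delta_1, \delta_2$ avoid slope-$1$ intervals (the intended reading of ``$\delta_i(t) < 1$''), I would run the identical injectivity argument as in Proposition \ref{prop:conj}: if $h$ were constant on a nondegenerate $J_0$, build $J_0, J_1, \dots$ with $J_{n+1}$ a nondegenerate branch of $\delta_i^{-1}(J_n)$, note lengths grow geometrically by $\rho^{-1}$ until $0$ and then $1$ enter some $J_n$, contradicting $h(0)=0 \ne 1 = h(1)$; surjectivity is free from $h(\pm1) = \pm1$ and monotonicity, so $h$ is a homeomorphism and the two systems are conjugate. The main obstacle, as flagged, is the contraction step in the degenerate-slope regime; everything else is a transcription of the proof already given.
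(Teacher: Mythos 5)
There is a genuine gap, and it sits exactly at the step you flagged as the main obstacle. The hypotheses ``$\sigma_1(t),\sigma_2(t)<1$'' and ``$\delta_1(t),\delta_2(t)<1$'' cannot, as you correctly noticed, be read literally (by (\ref{eq:Pconf3'}) we have $\sigma_1(1)=\delta_1(1)=1$, and regularity already gives $\sigma_i(t)<1$ for $t<1$); the intended reading is that the \emph{derivatives} are everywhere less than $1$: $\sigma_1'(t),\sigma_2'(t)<1$ for the first claim and, in addition, $\delta_1'(t),\delta_2'(t)<1$ for the second. These are precisely the quantities that the dropped condition (\ref{eq:Pconf1}) used to control, and precisely what the two halves of the proof of Proposition \ref{prop:conj} consume: $\max_i\sup_t\sigma_i'(t)<1$ (compactness plus the pointwise bound) is the contraction constant, and $\rho=\max_i\sup_t\delta_i'(t)<1$ is the expansion factor in the injectivity argument. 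By reinterpreting the hypothesis as ``pinning down which branch is iterated'' you discard it, and then nothing is left to prove contraction with: your own fallback that $\sigma_i'\le 1$ ``by (\ref{eq:Pconf2}) and (\ref{eq:Pconf1})'' is unavailable, since quasi P-configurations do not satisfy (\ref{eq:Pconf1}) and $\sigma_i'$ may exceed $1$ on large sets. The proposed repair --- that some iterate $T^N$ contracts because ``iterating $f$ drives points toward where $\sigma_i'<1$'' --- has no supporting mechanism: the backward orbits under $f$ spread over all of $I$, and without a hypothesis on $\sigma'$ the uniform product-of-derivatives bound you would need simply need not hold. Similarly, for the homeomorphism part you weaken the $\delta$-hypothesis to ``no interval of slope $1$'', which does not yield $\rho<1$ and therefore does not support the geometric-growth step you then invoke verbatim.

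Once the hypotheses are read as derivative bounds, the proof really is a transcription, and the paper's intended route is even shorter than your generalized operator. Apply the proof of Proposition \ref{prop:conj} (which, as observed, never uses (\ref{eq:Pconf1'})) to each of $(I,\delta)$ and $(I,\sigma)$ against the \emph{standard} configuration $s_1(t)=\frac{t+1}{2}$, $s_2(t)=\frac{t-1}{2}$: since $s$ sits on the outside of $T$, the contraction constant is $\frac{1}{2}$ with no hypotheses at all, giving unique continuous monotone maps $g_\delta,g_\sigma$ fixing $-1,0,1$ with $g_\delta\circ\delta_i=s_i\circ g_\delta$ and $g_\sigma\circ\sigma_i=s_i\circ g_\sigma$. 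The hypothesis $\sigma_i'<1$ makes $g_\sigma$ a homeomorphism (the interval-growth argument with $\rho_\sigma<1$), so $h=g_\sigma^{-1}\circ g_\delta$ is the desired continuous map, and it is unique because for any solution $h$ the map $g_\sigma\circ h$ must coincide with $g_\delta$; if in addition $\delta_i'<1$, then $g_\delta$, and hence $h$, is a homeomorphism. Your alternative of keeping $\sigma$ general on the outside of $T$ (and your correct observation that one cannot simply normalize $\sigma$ to the standard configuration at the outset) is workable too, but only after importing $\sup_t\sigma_i'(t)<1$ to get the contraction; there is no way to avoid using the hypothesis you set aside.
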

It remains an open question whether every two regular quasi P-configurations are conjugate or not.

\section{Existence of nonlinear solutions to homogeneous Cauchy type functional equations}

Using our observations from the previous section we can now resolve the open problem mentioned in the introduction.

\begin{theorem}\label{thm:exist}
Let $(I,\delta)$ be a regular P-configuration. Then equation (\ref{eq:CTFE}) has a nonlinear solution.
\end{theorem}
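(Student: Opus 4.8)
The plan is to exploit Proposition \ref{prop:conj}, which says that the specific P-configuration $\sigma_1(t)=\frac{t+1}{2}$, $\sigma_2(t)=\frac{t-1}{2}$ is conjugate to the given regular P-configuration $(I,\delta)$ via a homeomorphism $h$ fixing $-1,0,1$. The key observation is that equation (\ref{eq:CTFE}) transports along conjugacy: if $F$ solves $F(t)=F(\sigma_1(t))+F(\sigma_2(t))$, i.e. solves (\ref{eq:P}), then $f:=F\circ h$ should satisfy $f(t)=f(\delta_1(t))+f(\delta_2(t))$, because $f(\delta_i(t))=F(h(\delta_i(t)))=F(\sigma_i(h(t)))$, so $f(\delta_1(t))+f(\delta_2(t))=F(\sigma_1(h(t)))+F(\sigma_2(h(t)))=F(h(t))=f(t)$, using that $\sigma_1+\sigma_2=\mathrm{id}$. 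Moreover $f$ is continuous, and $f$ is nonlinear as long as $F$ is nonlinear, since $h$ is a homeomorphism (composition of a nonlinear continuous function with a homeomorphism of an interval is nonlinear — it cannot be affine, as then $F=f\circ h^{-1}$ would also fail to be affine unless... one should check this small point). So everything reduces to the single concrete equation (\ref{eq:P}).

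Hence the real work is: \emph{exhibit a continuous nonlinear solution $F$ to (\ref{eq:P})}. The natural approach is to build $F$ explicitly using the binary/self-similar structure of $\sigma_1,\sigma_2$. Notice that $\sigma_1$ maps $[-1,1]$ onto $[0,1]$ and $\sigma_2$ maps $[-1,1]$ onto $[-1,0]$, and these two halves overlap only at the point $0$; iterating, one gets a dyadic decomposition of $[-1,1]$, with $\sigma_{\epsilon_1}\circ\cdots\circ\sigma_{\epsilon_n}$ mapping $I$ onto a dyadic subinterval of length $2^{-n+1}$. The functional equation (\ref{eq:P}) rewritten via the inverses is exactly the statement that the ``value on a parent interval equals the sum of values on its two children,'' which is the defining relation of a (signed) additive interval function — equivalently, $F$ is determined by its increments and the relation says these increments behave additively under the dyadic refinement in a prescribed way. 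One should look for $F$ of the form determined by choosing the split of $F$'s increment over a dyadic interval into the increments over its two halves according to a fixed ratio, or more robustly, via a Schauder-type series $F(t)=\sum_n \sum_{\text{dyadic }k} a_{n,k}\,\Delta_{n,k}(t)$ with the coefficients chosen to satisfy the recursion coming from (\ref{eq:P}) — a de Rham / Takagi-style construction. Choosing the splitting ratio to be something other than the one giving $F(t)=t$ will produce a continuous nonlinear (indeed typically nowhere-differentiable, or at least non-affine) solution.

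I expect the main obstacle to be the construction and verification step for $F$: one must (i) show the series or iterative scheme converges uniformly to a continuous function on $[-1,1]$, (ii) verify it genuinely satisfies (\ref{eq:P}) — in particular check the compatibility at the overlap point $t=0$, where $\sigma_1(0)=\tfrac12$ lands at an endpoint of a child interval and $\sigma_2(0)=-\tfrac12$, and where $F(0)$ must equal $F(\tfrac12)+F(-\tfrac12)$, forcing $F(0)=0$ and constraining the coefficients, and (iii) confirm the solution produced is not of the form $cz$, e.g. by evaluating at a convenient dyadic point such as $t=\tfrac12$ and comparing with $\tfrac{c}{2}$ where $c=F(1)=1$. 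The endpoint/overlap bookkeeping in (ii) is the delicate part: the two inverse branches $\delta_1^{-1},\delta_2^{-1}$ used in the change of variables (just as $f$ and $\chi$ were used in the proof of Proposition \ref{prop:conj}) must be pieced together consistently, and one has to make sure the recursion does not secretly force linearity. A clean way to organize this is to mirror the proof of Proposition \ref{prop:conj}: set up a contraction on an appropriate complete metric space of candidate solutions (e.g. continuous functions with prescribed values at $-1,0,1$ and a prescribed value at one additional dyadic point, to break the affine solution's uniqueness), show the operator $g\mapsto$ ``$g$ reassembled from its values on the two child copies via the relation'' is a contraction there, and read off a fixed point; then nonlinearity is automatic from the extra prescribed value. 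This reuses machinery already in the paper and isolates the only genuinely new ingredient, namely the freedom in the extra data that the $C^1$ hypothesis in Theorem \ref{thm:P} would have killed.
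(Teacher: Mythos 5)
There is a genuine gap: your reduction leaves the heart of the theorem unproved. You reduce (\ref{eq:CTFE}) for an arbitrary regular P-configuration to the model equation (\ref{eq:P}) and then say that the ``real work'' is to exhibit a continuous nonlinear solution of (\ref{eq:P}) by a de Rham/Takagi-type construction --- but (\ref{eq:P}) is itself an instance of (\ref{eq:CTFE}) (indeed it is exactly the open case highlighted in the introduction), so until that construction is actually carried out you have proved nothing, and your sketch of it does not go through as stated. The fixed-ratio splitting of increments over dyadic intervals produces the distribution function of a biased Bernoulli measure, which satisfies $F(t)=F(\sigma_1(t))+F(\sigma_2(t))-F(0)$ with $F(0)\neq 0$, i.e.\ only a corrected combination (subtracting a suitable linear function) solves the homogeneous equation; and the contraction idea fails at the outset, since the natural operator $G\mapsto G(\sigma_1(\cdot))+G(\sigma_2(\cdot))$ is expansive, while solving the relation for $F$ on a half-interval gives $F(z)=F(2z-1)-F(z-1)$, which references values spread over all of $I$ and is not a contraction on any space of the kind you describe. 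The paper's proof makes all of this unnecessary: it observes that the conjugating homeomorphism $h$ of Proposition \ref{prop:conj} (between the given $(I,\delta)$ and \emph{any other} regular P-configuration $(I,\sigma)$ with $\sigma_1\neq\delta_1$) is \emph{itself} a solution of (\ref{eq:CTFE}), because adding the two identities $h(\delta_i(t))=\sigma_i(h(t))$ and using $\sigma_1+\sigma_2=\mathrm{id}$ gives $h(\delta_1(t))+h(\delta_2(t))=h(t)$; nonlinearity is then immediate, since $h(1)=1$ would force a linear $h$ to be the identity, which would give $\delta_1=\sigma_1$. No solution of (\ref{eq:P}) needs to be constructed at all.

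A second, smaller gap is the step you flagged yourself: it is false that a nonlinear continuous $F$ composed with a homeomorphism $h$ is nonlinear (take $F=h^{-1}$). In your setup $F\circ h=c\,\mathrm{id}$ is possible precisely when $F=c\,h^{-1}$, so the transport of nonlinearity needs an argument --- either choose $F$ from a family of solutions of (\ref{eq:P}) not proportional to $h^{-1}$, or note that in this degenerate case $h^{-1}=F/c$ is nonlinear, hence so is $h$, and $h$ already solves (\ref{eq:CTFE}) --- which is again exactly the paper's argument. So even the patch for this point leads you back to the observation you missed: the conjugacy itself is the nonlinear solution.
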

\begin{proof}
Let $(I,\sigma_1, \sigma_2)$ be another regular P-configuration with $\sigma_1 \neq \delta_1$. By Proposition \ref{prop:conj}, there exists a homeomorphism $h:I \rightarrow I$ such that 
\bes
h(\delta_i(t)) = \sigma_i(h(t)) \,\, , \,\,i=1,2 \,\, , \,\, t \in I.
\ees
Adding these two equations we obtain
\bes
h(\delta_1(t)) + h(\delta_2(t)) = \sigma_1(h(t)) + \sigma_2(h(t)) = h(t) \,\, , \,\, t \in I,
\ees
thus, $h$ solves (\ref{eq:CTFE}). Now $h(1)=1$, so if it were linear it would have to be the identity function on $I$. However, $h$ is not the identity function, because that would have implied $\delta_1(t) = h(\delta_1(t)) = \sigma_1(h(t)) = \sigma_1(t)$, contrary to the fact that $\sigma_1 \neq \delta_1$.
\end{proof}

The above proof shows that every conjugation of two P-configurations is a solution of (\ref{eq:CTFE}). 
There is also a partial converse to this. We record these facts in the following theorem.
\begin{theorem}
If $(I,\delta_1, \delta_2)$ and 
$(I,\sigma_1, \sigma_2)$ are two regular P-configurations, and $f:I \rightarrow I$ is a continuous function such that 
\be\label{eq:fconj}
f(\delta_i(t)) = \sigma_i(f(t)) \,\, , \,\,i=1,2 \,\, , \,\, t \in I,
\ee
then $f$ satisfies (\ref{eq:CTFE}). Conversely, if $(I,\delta_1, \delta_2)$ is a given regular P-configuration and 
$f$ is a strictly increasing continuous solution of (\ref{eq:CTFE}) that fixes the points $-1,0$ and $1$, then there exists a dynamical system
$(I,\sigma_1, \sigma_2)$, where $\sigma_1, \sigma_2$ are strictly increasing maps that satisfy (\ref{eq:Pconf1'}) 
and (\ref{eq:Pconf3'}), such that $f$ is a conjugation of $(I,\delta_1, \delta_2)$ and $(I,\sigma_1, \sigma_2)$.
\end{theorem}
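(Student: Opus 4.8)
The plan is to treat the two directions separately; each is a short computation, and the entire content is the observation that the additivity condition (\ref{eq:Pconf1'}) for a P-configuration is precisely the conditional Cauchy equation (\ref{eq:CTFE}) satisfied by a conjugation. For the forward direction I would argue exactly as in the proof of Theorem \ref{thm:exist}: given a continuous $f$ with $f(\delta_i(t)) = \sigma_i(f(t))$ for $i=1,2$ and all $t \in I$, add the two identities and use that $\sigma_1,\sigma_2$ form a P-configuration, hence $\sigma_1(s) + \sigma_2(s) = s$ for every $s$, to obtain
\bes
f(\delta_1(t)) + f(\delta_2(t)) = \sigma_1(f(t)) + \sigma_2(f(t)) = f(t), \qquad t \in I,
\ees
which is (\ref{eq:CTFE}).

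For the converse, start from a regular P-configuration $(I,\delta_1,\delta_2)$ and a strictly increasing continuous $f$ solving (\ref{eq:CTFE}) with $f(-1)=-1$, $f(0)=0$, $f(1)=1$. Being strictly increasing and continuous on $I$ with $f(\pm 1)=\pm 1$, the map $f$ is a homeomorphism of $I$ onto itself, and $f^{-1}$ is again strictly increasing, continuous, and fixes $-1,0,1$. I then simply \emph{define}
\bes
\sigma_i(s) = f\bigl(\delta_i(f^{-1}(s))\bigr), \qquad s \in I,\ i = 1,2,
\ees
so that the conjugacy relation $f(\delta_i(t)) = \sigma_i(f(t))$ holds by construction (set $s=f(t)$); thus $f$ conjugates the dynamical systems $(I,\delta_1,\delta_2)$ and $(I,\sigma_1,\sigma_2)$, and it remains only to verify the three asserted properties of $\sigma$. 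Strict monotonicity: each $\sigma_i$ is a composition of three strictly increasing self-maps of $I$, using here that regularity of $(I,\delta)$, that is $\Lambda_1 = \Lambda_2 = \emptyset$, forces $\delta_i' > 0$ and hence $\delta_i$ strictly increasing. Boundary conditions (\ref{eq:Pconf3'}) for $\sigma$: evaluate at $s = \pm 1$, using $f^{-1}(\pm 1)=\pm 1$, $f(0)=0$, and (\ref{eq:Pconf3'}) for $\delta$; for instance $\sigma_2(1) = f(\delta_2(1)) = f(0) = 0$ and $\sigma_1(-1) = f(\delta_1(-1)) = f(0) = 0$. The additivity relation (\ref{eq:Pconf1'}): writing $t = f^{-1}(s)$ and invoking that $f$ solves (\ref{eq:CTFE}),
\bes
\sigma_1(s) + \sigma_2(s) = f(\delta_1(t)) + f(\delta_2(t)) = f(t) = s.
\ees
This is the one place where the hypothesis that $f$ satisfies (\ref{eq:CTFE}) is used, and it is exactly what makes the construction close up.

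I do not expect any genuine obstacle; the only subtle point is one of formulation. Since $f$ is assumed merely continuous, the conjugated maps $\sigma_i = f \circ \delta_i \circ f^{-1}$ are continuous but in general not $C^1$, so $(I,\sigma_1,\sigma_2)$ need not be a P-configuration in the sense of Section 2 --- it is only a dynamical system whose components are strictly increasing and satisfy (\ref{eq:Pconf1'}) and (\ref{eq:Pconf3'}). This is precisely why the theorem is stated with that weaker conclusion rather than asserting that $f$ conjugates $(I,\delta)$ to another genuine P-configuration.
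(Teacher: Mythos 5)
Your proposal is correct and follows essentially the same route as the paper: the forward direction by adding the two conjugacy identities and using $\sigma_1+\sigma_2=\mathrm{id}$, and the converse by defining $\sigma_i = f\circ\delta_i\circ f^{-1}$ and verifying (\ref{eq:Pconf1'}) via (\ref{eq:CTFE}) and (\ref{eq:Pconf3'}) via the fixed points. Your additional explicit checks (that $f$ is a self-homeomorphism, that regularity gives $\delta_i$ strictly increasing, and the remark on why $(I,\sigma)$ need not be a genuine P-configuration) only spell out what the paper leaves implicit.
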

We cannot say that $(I,\sigma_1, \sigma_2)$ is a P-configuration because the maps are not necessarily differentiable.

\begin{proof}
We have to prove the converse direction. Define
\bes
\sigma_i = f\circ\delta_i \circ f^{-1} .
\ees
Obviously, $f$ is a conjugation of $(I,\delta_1, \delta_2)$ and $(I,\sigma_1, \sigma_2)$. The $\sigma_i$'s are strictly increasing and satisfy (\ref{eq:Pconf3'}). To see (\ref{eq:Pconf1'}), use (\ref{eq:CTFE}) to compute
\bes
\sigma_1(t) + \sigma_2(t) = f(\delta_1(f^{-1}(t))) + f(\delta_2(f^{-1}(t))) = f(f^{-1}(t)) = t.
\ees
\end{proof}

We conclude this section with another application to functional equations.
\begin{theorem}
Let $(I,\delta)$ be a quasi regular P-configuration. Then the functional equation
\bes
f(t) = f(\delta_1(t)) + f(\delta_2(t)) \,\, , \,\, t \in I ,
\ees
has a nontrivial solution.
\end{theorem}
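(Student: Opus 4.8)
The plan is to run the argument from the proof of Theorem \ref{thm:exist}, using Proposition \ref{prop:conj_gen} in place of Proposition \ref{prop:conj}. The one extra point to watch is that adding the two conjugacy relations of a conjugation between regular \emph{quasi} P-configurations produces $(\sigma_1+\sigma_2)(h(t))$ on the right-hand side, and for a mere quasi P-configuration $\sigma_1+\sigma_2$ need not be the identity. However, if the \emph{target} configuration is chosen to be an honest regular P-configuration, then $\sigma_1+\sigma_2=\mathrm{id}$ by (\ref{eq:Pconf1'}) and the right-hand side collapses to $h(t)$, which is exactly the equation we want to solve.

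So, first I would (exactly as in the proof of Theorem \ref{thm:exist}) fix an honest regular P-configuration $(I,\sigma)$ with $\sigma_1\neq\delta_1$ — for instance the standard one, $\sigma_1(t)=\tfrac{t+1}{2}$, $\sigma_2(t)=\tfrac{t-1}{2}$, or, in case that coincides with $\delta$, a small $C^1$ perturbation of it. For such a $\sigma$ the map $\sigma_1$ is strictly increasing with $\sigma_1(1)=1$ while $\sigma_2\leq 0$, so $\sigma_1(t),\sigma_2(t)<1$ for $t<1$ and Proposition \ref{prop:conj_gen} applies to the pair $(I,\delta)$, $(I,\sigma)$, yielding a continuous map $h\colon I\to I$ with $h(-1)=-1$, $h(0)=0$, $h(1)=1$ and $h(\delta_i(t))=\sigma_i(h(t))$ for $i=1,2$ and all $t\in I$. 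Summing the two identities and using $\sigma_1+\sigma_2=\mathrm{id}$ then gives
\bes
h(\delta_1(t))+h(\delta_2(t))=\sigma_1(h(t))+\sigma_2(h(t))=h(t),\qquad t\in I,
\ees
so $h$ is a solution of the functional equation of the theorem.

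It would then remain to check that $h$ is nontrivial. Since $h(1)=1\neq 0$, the function $h$ is not identically zero; and since $h$ fixes $-1$ and $1$, if $h$ were the restriction of a linear function it would have to be the identity map, which is impossible: $h=\mathrm{id}$ together with $h\circ\delta_i=\sigma_i\circ h$ would force $\delta_i=\sigma_i$, contradicting $\sigma_1\neq\delta_1$. Hence $h$ is a nonlinear (in particular nonzero) solution.

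The real work has already been done in Proposition \ref{prop:conj_gen}; given it, this theorem is immediate, so I do not expect a serious obstacle. The only two points that genuinely need attention are (i) taking the conjugacy \emph{target} to be a bona fide P-configuration, so that the sum of the conjugacy equations reduces to $h(t)$ rather than to $(\sigma_1+\sigma_2)(h(t))$, and (ii) taking that target distinct from $\delta$, which handles uniformly both the case where $\delta$ is itself a regular P-configuration (so that ``nontrivial'' has to be read as ``nonlinear'') and the case where it is not (where $f\equiv 0$ is the only linear solution, and $h\not\equiv 0$ already suffices).
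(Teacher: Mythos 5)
Your proof is correct and takes essentially the same route as the paper, whose entire proof of this theorem is that it is ``the same as the proof of Theorem \ref{thm:exist}, except that one uses Proposition \ref{prop:conj_gen} instead of Proposition \ref{prop:conj}.'' The extra care you take --- choosing the conjugacy \emph{target} to be a genuine regular P-configuration so that $\sigma_1+\sigma_2=\mathrm{id}$ and the summed intertwining relations collapse to $h(t)$, and reading ``nontrivial'' appropriately in the quasi case --- is exactly what the paper leaves implicit, so there is no divergence in substance.
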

\begin{proof}
The proof is the same as the proof of Theorem \ref{thm:exist}, except that one uses Proposition \ref{prop:conj_gen} instead of \ref{prop:conj}.
\end{proof}

\section{Nonexistence of a continuously differentiable conjugacy between regular P-configurations}

By Proposition \ref{prop:conj} every two regular P-configurations are conjugate, i.e., there is a homeomorphism $h:I \rightarrow I$ that intertwines the maps. However, P-configurations are \emph{smooth} entities, the maps being assumed to be $C^1$, and sometimes smoother. Therefore, it is natural to ask whether the (unique) conjugating map $h$ is as smooth as the maps 
in the P-configurations, or is it at least continuously differentiable. It is remarkable that this \emph{never} happens, unless the two P-configurations are identical.

\begin{proposition}
Let $(I,\delta_1, \delta_2)$ and 
$(I,\sigma_1, \sigma_2)$ be two distinct (i.e., $\sigma_1 \neq \delta_1$) regular P-configurations.
Then the conjugating map $h$ given by Proposition \ref{prop:conj} is not continuously differentiable.
\end{proposition}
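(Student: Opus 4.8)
The plan is to reduce the statement to the $C^1$‑rigidity already quoted in Theorem~\ref{thm:P}. First I would recall the computation from the proof of Theorem~\ref{thm:exist}: adding the two conjugacy relations $h(\delta_1(t))=\sigma_1(h(t))$ and $h(\delta_2(t))=\sigma_2(h(t))$ and using that $\sigma_1(s)+\sigma_2(s)=s$ (with $s=h(t)$), one finds that $h$ is itself a continuous solution of the homogeneous Cauchy type equation (\ref{eq:CTFE}) associated with the P‑configuration $(I,\delta_1,\delta_2)$. Moreover, the same proof shows $h$ is not the identity map of $I$, for otherwise $\delta_i=\sigma_i$ for $i=1,2$, contradicting $\sigma_1\neq\delta_1$.

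Next I would note that since $(I,\delta_1,\delta_2)$ is \emph{regular}, i.e. $\Lambda_1=\Lambda_2=\emptyset$, condition (\ref{eq:Pconf2}) upgrades to $\delta_1'(t),\delta_2'(t)>0$ for every $t\in I$, so the hypotheses of Theorem~\ref{thm:P} are met. Hence, if $h$ were continuously differentiable, Theorem~\ref{thm:P} would force $h(z)=cz$ for some constant $c$; the normalization $h(1)=1$ coming from (\ref{eq:fixedpoints}) then gives $c=1$, i.e. $h=\mathrm{id}_I$, contradicting the previous paragraph. Therefore $h\notin C^1$. The only mildly delicate point to verify is that "$C^1$'' in the statement and in Theorem~\ref{thm:P} refer to the same notion of continuous differentiability up to the endpoints of $I$, which is routine. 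With this approach there is essentially no obstacle: all the real content has been outsourced to Paneah's theorem.

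If one prefers a proof independent of Theorem~\ref{thm:P}, I would argue directly from (\ref{eq:CTFE}). Assume $h\in C^1$ and differentiate $h(t)=h(\delta_1(t))+h(\delta_2(t))$ to get $h'(t)=\delta_1'(t)\,h'(\delta_1(t))+\delta_2'(t)\,h'(\delta_2(t))$, which by (\ref{eq:Pconf1}) and regularity is a genuine convex combination with strictly positive coefficients. Put $M=\max_{t\in I}h'(t)$ and $S=\{t\in I:h'(t)=M\}$; $S$ is nonempty and closed, and the convexity identity forces $\delta_1(S),\delta_2(S)\subseteq S$. Since $\rho:=\max(\sup_I\delta_1',\sup_I\delta_2')<1$, the pair $(\delta_1,\delta_2)$ is a contractive iterated function system, and because $\delta_1(I)\cup\delta_2(I)=[0,1]\cup[-1,0]=I$, its attractor is $I$ itself; hence every nonempty closed set invariant under $\delta_1,\delta_2$ equals $I$. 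Thus $h'\equiv M$, so $h$ is affine, and $h(-1)=-1,\ h(1)=1$ give $h=\mathrm{id}_I$, the same contradiction as before. In this variant the main obstacle is precisely the invariance step — showing that a closed $\delta$‑invariant set must exhaust $I$ — which is where the covering/"guiding'' dynamics of the P‑configuration genuinely enter, exactly the feature that Theorem~\ref{thm:P} packages for us in the slick argument.
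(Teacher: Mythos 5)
Your main argument is exactly the paper's proof: the conjugacy $h$ is a continuous, nonlinear (since $h\neq\mathrm{id}_I$ by $\sigma_1\neq\delta_1$ and the normalization $h(1)=1$) solution of (\ref{eq:CTFE}), so Theorem~\ref{thm:P} — applicable because regularity gives $\delta_1',\delta_2'>0$ — rules out $h\in C^1$. Your self-contained variant via the maximum of $h'$ and invariance of $\{h'=M\}$ under the contractive system $(\delta_1,\delta_2)$ with attractor $I$ is also sound, but it is essentially a direct reproof of the special case of Paneah's theorem that the paper simply cites.
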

\begin{proof}
We have already observed in the proof of Theorem \ref{thm:exist} that $h$ is a nonlinear solution to the homogeneous Cauchy type functional equation (\ref{eq:CTFE}). However, by Theorem \ref{thm:P} discussed above, the only $C^1$ solutions of (\ref{eq:CTFE}) are linear. Thus, $h$ cannot be $C^1$.
\end{proof}

\section{On the diversity of non-regular P-configurations}

By Proposition \ref{prop:conj}, all regular P-configurations are ``the same". An evident necessary condition for two not-necessarily-regular P-configurations to be conjugate is that the guiding sets (the $\Lambda$'s) be homeomorphic, as well as their complements. As the set of all possible pairs $(\Lambda_1, \Lambda_2)$ is the set of all pairs of closed and disjoint sets in the interval, we see that the set of conjugacy classes of P-configurations is quite rich.

But the distinction between genuinely different kinds of P-configurations starts much before that: we will see soon that
there are infinitely many P-configurations $(I,\delta, \Lambda)$ with $\Lambda_2 = \emptyset$ and $\Lambda_1$ a singleton which are pairwise not isomorphic. Indeed, let $(I,\delta, \Lambda)$ and $(I,\sigma, \Omega)$ be two P-configurations with $\Lambda_1 = \{\lambda\}$, $\Omega_1 = \{\omega\}$, and the other guiding sets empty.
With Proposition \ref{prop:conj} in mind, we expect there to be at most one conjugation $h$ between these two systems, when considered without
their guiding sets. Following this heuristic, we expect there to be no freedom left for adjusting $h$ in such a manner that $h(\lambda) = \omega$. The following proposition makes this heuristic precise.

\begin{proposition}
There are infinitely many non-isomorphic P-configurations $(I,\delta, \Lambda)$ with $\Lambda_2 = \emptyset$ and $\Lambda_1$ consisting of a single point.
\end{proposition}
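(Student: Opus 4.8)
The plan is to reduce an isomorphism of P-configurations to a conjugacy of the two underlying dynamical systems, to show that this conjugacy is unique whenever it exists, and finally to attach to each admissible P-configuration a combinatorial invariant — essentially the ``address'' of its critical point — which manifestly takes infinitely many values. Throughout, let $(I,\delta,\Lambda)$ and $(I,\sigma,\Omega)$ be two P-configurations with $\Lambda_2=\Omega_2=\emptyset$, $\Lambda_1=\{\lambda\}$, $\Omega_1=\{\omega\}$; it suffices to treat the case $\lambda,\omega\in(-1,1)$.

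First I would extend the uniqueness half of Proposition~\ref{prop:conj} to this setting. Since a derivative vanishing at a single point does not destroy strict monotonicity, $\delta_1$ and $\delta_2$ are still invertible on their ranges, so the maps $f$, $\chi$ and the operator $T$ from the proof of Proposition~\ref{prop:conj} still make sense, with the factor $\tfrac12$ replaced by the appropriate branch $\sigma_i$. The one thing that fails is that $\sup_t\sigma_2'(t)=1$ now, so $T$ need not be a strict contraction. I would repair this by passing to $T^2$: for every $i,j$ the derivative of $\sigma_i\circ\sigma_j$ is $<1$ everywhere, because $(\sigma_2\circ\sigma_2)'(t)=1$ would force $\sigma_2'(t)=1$ and $\sigma_2'(\sigma_2(t))=1$, i.e.\ $t=\omega=\sigma_2^{-1}(\omega)$, making $\omega$ a fixed point of $\sigma_2$ — impossible, as $-1$ is its only fixed point (here $\Omega_2=\emptyset$ is used) and $\omega\ne-1$. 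By compactness $\sup|(\sigma_i\circ\sigma_j)'|<1$, so $T^2$ is a strict contraction, $T$ has a unique fixed point $h\in\cC$, and, exactly as in Proposition~\ref{prop:conj}, $h$ is the unique nondecreasing continuous self-map of $I$ fixing $-1,0,1$ with $h\circ\delta_i=\sigma_i\circ h$; I will not need $h$ to be a homeomorphism.

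Next comes the reduction. Any isomorphism of $(I,\delta,\Lambda)$ with $(I,\sigma,\Omega)$ is a homeomorphism of $I$ that fixes $-1$ and $1$ (the unique fixed points of $\sigma_2$, resp.\ $\sigma_1$), hence also fixes $0$ (apply $h\circ\delta_1=\sigma_1\circ h$ at $t=-1$); being a homeomorphism fixing both endpoints it is increasing, so it lies in $\cC$, and satisfying $h\circ\delta_i=\sigma_i\circ h$ it is a fixed point of $T$, hence \emph{is} the canonical map of the previous paragraph. Since an isomorphism must carry $\Lambda_1$ onto $\Omega_1$, I conclude that if the two P-configurations are isomorphic then $h(\lambda)=\omega$. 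Writing $\delta_{\mathbf w}=\delta_{w_1}\circ\cdots\circ\delta_{w_n}$ for a finite word $\mathbf w=(w_1,\dots,w_n)\in\{1,2\}^n$, iterating $h\circ\delta_i=\sigma_i\circ h$ and using $h(0)=0$ gives $h(\delta_{\mathbf w}(0))=\sigma_{\mathbf w}(0)$. The combinatorial fact I would record here is that a point of the form $\delta_{\mathbf w}(0)$ determines $\mathbf w$: the cylinders $\delta_{\mathbf v}(I)$, $|\mathbf v|=n$, tile $I$ with pairwise disjoint interiors (as $\delta_1(I)=[0,1]$, $\delta_2(I)=[-1,0]$), and $\delta_{\mathbf w}(0)$ lies in the interior of $\delta_{\mathbf w}(I)$, which settles words of equal length; and $\delta_{\mathbf u}(0)\ne0$ for every nonempty $\mathbf u$, which settles words of unequal length.

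For the construction, for $p\in[-1,1]$ I would choose a $C^1$ function $g_p\colon I\to[0,1)$ depending continuously on $p$, vanishing exactly at $p$, with $\int_{-1}^1 g_p=1$ (such families plainly exist — e.g.\ $g_p=\tfrac12(1+s_p)$ with $s_p(t)=\bigl((t-p)^2-m_p\bigr)/\bigl((t-p)^2+m_p\bigr)$ and $m_p$ fixed, via the intermediate value theorem, so that $\int_{-1}^1 s_p=0$). Setting $\delta_1^{(p)}(t)=\int_{-1}^t g_p$ and $\delta_2^{(p)}=\mathrm{id}-\delta_1^{(p)}$ produces a P-configuration $(\delta^{(p)},\{p\})$ with $\Lambda_2=\emptyset$ and $\Lambda_1=\{p\}$, depending continuously on $p$. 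For a fixed nonempty word $\mathbf w$ the map $G_{\mathbf w}(p)=\delta^{(p)}_{\mathbf w}(0)$ is continuous and, since $0$ is interior to $I$ and each $\delta^{(p)}_i$ maps $(-1,1)$ into itself, it sends $[-1,1]$ into $(-1,1)$; hence $G_{\mathbf w}(p)-p$ is positive at $p=-1$ and negative at $p=1$, so $G_{\mathbf w}$ has a fixed point $p^*_{\mathbf w}\in(-1,1)$, and I set $A_{\mathbf w}=(\delta^{(p^*_{\mathbf w})},\{p^*_{\mathbf w}\})$, whose critical point is $\lambda_{\mathbf w}=p^*_{\mathbf w}=\delta^{A_{\mathbf w}}_{\mathbf w}(0)$. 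If $A_{\mathbf w}\cong A_{\mathbf w'}$ then, by the reduction, the canonical conjugation $h$ gives $\delta^{A_{\mathbf w'}}_{\mathbf w'}(0)=\lambda_{\mathbf w'}=h(\lambda_{\mathbf w})=h(\delta^{A_{\mathbf w}}_{\mathbf w}(0))=\delta^{A_{\mathbf w'}}_{\mathbf w}(0)$, and the combinatorial fact forces $\mathbf w=\mathbf w'$; so the $A_{\mathbf w}$, over the infinitely many nonempty words, are pairwise non-isomorphic. I expect the only real obstacle to be the uniqueness statement of the second paragraph: these systems are not regular — not even regular \emph{quasi} P-configurations — so Propositions~\ref{prop:conj} and~\ref{prop:conj_gen} do not apply directly, and the failure of $T$ to contract must be circumvented by the $T^2$ argument, which relies on the critical point of $\sigma_1$ not being a fixed point of $\sigma_2$; the fixed-point construction and the cylinder bookkeeping are routine.
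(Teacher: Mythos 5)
Your proof is correct, but it takes a genuinely different and considerably heavier route than the paper's. The paper works with explicit examples in which $\delta_1(t)=\frac{t+1}{2}$ everywhere except on a single block $J_n=[1-1/2^n,1-1/2^{n+1}]$, where the derivative is perturbed so as to vanish at exactly one interior point; since any conjugacy of the underlying dynamical systems must fix the forward $\delta_1$-orbit of $0$, i.e.\ the points $1-1/2^m$, a monotone conjugacy maps each $J_m$ onto itself and therefore cannot carry a critical point lying in the interior of $J_n$ to one lying in the interior of $J_k$ for $k\neq n$. No existence or uniqueness of conjugacies is needed. Your argument rests on the same rigidity (an isomorphism fixes $0$ and hence sends $\delta_{\mathbf{w}}(0)$ to $\sigma_{\mathbf{w}}(0)$), but instead of trapping the critical point \emph{between} orbit points you place it \emph{exactly at} an orbit point $\delta_{\mathbf{w}}(0)$, via a continuously parametrized family and the intermediate value theorem, and then recover the word $\mathbf{w}$ as an invariant from the disjointness of the cylinders $\delta_{\mathbf{v}}(I)$. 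This buys a cleaner combinatorial invariant (the address $\mathbf{w}$ itself, and incidentally an extension of the uniqueness in Proposition \ref{prop:conj} to these singular configurations via your $T^2$ argument, whose key point --- that $\omega$ is not a fixed point of $\sigma_2$ --- is exactly right) at the cost of the fixed-point construction and the cylinder bookkeeping. Two minor remarks: the $T^2$ paragraph, though correct, is never used in your final step, since all you need there is that an isomorphism fixes $0$, intertwines the maps, and carries $\Lambda_1$ onto $\Omega_1$; and in the ``combinatorial fact'' the case of words of unequal length neither of which is a prefix of the other still requires comparing $\mathbf{w}$ with the length-$|\mathbf{w}|$ truncation of $\mathbf{w}'$ via the cylinder argument, not only the observation that $\delta_{\mathbf{u}}(0)\neq 0$ --- the ingredients you list do cover this, but the sorting of cases should be stated.
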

\begin{proof}
For every $n \in \mathbb{N}$, denote $J_n:= [1-1/2^n,1-1/2^{n+1}]$. Fix two integers $n \neq k$. Let $\delta_1(t) = \frac{t+1}{2}$ for all $t \in I$, except for $t \in J_n$.
In $J_n$ we let $\delta_1$ be a smooth perturbation of $\frac{t+1}{2}$ such that its derivative in this segment
is never equal to $1$ and is equal $0$ at precisely one point $\lambda$. $\delta_2(t)$ is defined to be $t - \delta_1(t)$. We then have $\Lambda_2 = \emptyset$ and $\Lambda_1 = \{\lambda\} \subset  \textrm{int}(J_n)$.

We define $\sigma_1$ in the same manner with the difference that $\sigma_1(t)$ is different from $\frac{t+1}{2}$ only in $J_k$, and there its derivative is never $1$ and is $0$ at precisely one point $\omega$. As before we have $\Omega_2 = \emptyset$ and $\Omega_1 = \{\omega\} \subset \textrm{int}(J_k)$.

Now assume that $h$ is an isomorphism of the dynamical systems $(I,\delta)$ and $(I,\sigma)$. We shall show that it cannot be an isomorphism of P-configurations (that is, it cannot be an isomorphism of guided dynamical systems). We already know that $h(0) = 0$.
Putting $t = 0$ in
\bes
h(\delta_1(t)) = \sigma_1(h(t)) ,
\ees
we have $h(1/2) = h(\delta_1(0)) = \sigma_1(h(0)) = \sigma_1(0) = 1/2$. Induction yields 
\bes
h(1-1/2^m) = 1-1/2^m \,\, , \,\, m \in \mathbb{N}.
\ees
It follows that $h$ maps $J_m$ to $J_m$, and thus cannot map $\lambda$ to $\omega$.
\end{proof}

\section*{Acknowledgments}
I am grateful to Boris Paneah for giving me the problem. Daniel Reem has read a preliminary version of the manuscript 
and suggested many improvements. I wish also to thank Dilian Yang for several intriguing discussions.
The partial funding granted by the Pollak fellowship is greatly acknowledged.

\end{document}